\newcommand{\PP}{\mathbb{P}}
\newcommand{\ZZ}{\mathbb{Z}}
\newcommand{\cF}{\mathcal{F}}
\newcommand{\cU}{\mathcal{U}}
\newtheorem{theorem}{Theorem}
\newtheorem{Lemma}[theorem]{Lemma}
\title{An infinitely generated upper cluster algebra}
\author{David E Speyer}
\begin{document}

\begin{abstract} 
We show that upper cluster algebras need not be finitely generated, answering a question of Berenstein, Fomin and Zelevinsky. Our counter-example is a cluster algebra with $B$-matrix $\left( \begin{smallmatrix} 0 & 3 & -3 \\ -3 & 0 & 3 \\ 3 & -3 & 0 \end{smallmatrix} \right)$ and coefficients obeying a genericity condition. 
\end{abstract}

\maketitle

Let $A$ be an integral domain and let $a_1^+$, $a_1^-$, $a_2^+$, $a_2^-$, $a_3^+$, $a_3^-$ be nonzero elements of $A$ such that, for any positive integer $n$, the products $\prod_i (a_i^+)^n$ and $\prod_i (a_i^-)^n$ are not equal. Let $\cF$ be the field $\mathrm{Frac}(A)(x_1, x_2, x_3)$. Define
\[ \begin{array}{rcl}
x'_1 &=& \frac{a_1^+ x_2^3 + a_1^- x_3^3}{x_1} \\
x'_2 &=& \frac{a_2^+ x_3^3 + a_2^- x_1^3}{x_2} \\
x'_3 &=& \frac{a_3^+ x_1^3 + a_3^- x_2^3}{x_3} \\
\end{array}\]

Let $\cU$ be the intersection of Laurent polynomial rings
\[ A[x_1^{\pm}, x_2^{\pm}, x_3^{\pm}] \cap A[(x'_1)^{\pm}, x_2^{\pm}, x_3^{\pm}] \cap A[x_1^{\pm}, (x'_2)^{\pm}, x_3^{\pm}] \cap A[x_1^{\pm}, x_2^{\pm}, (x'_3)^{\pm}]. \]

The aim of this note is to show that $\cU$ is not finitely generated. 


Let $P$ be a torsion free abelian group, let $A$ be the group algebra $\ZZ P$ and let the $a_i^{\pm}$ be in $P$. Then $\cU$ is an upper cluster algebra with exchange matrix $\left( \begin{smallmatrix} 0 & 3 & -3 \\ -3 & 0 & 3 \\ 3 & -3 & 0 \end{smallmatrix} \right)$, as defined in~\cite{CA3}. More precisely, $\cU$ is the ``upper bound" for the seed $(x_1, x_2, x_3)$ which, by~\cite[Corollary 1.9]{CA3} is equal to the upper cluster algebra. This shows that upper cluster algebras need not be finitely generated; answering~\cite[Problem 1.27]{CA3}.

Let $K = \mathrm{Frac}(A)$. If $\cU$ is finitely generated then so is $\cU \otimes K$, so it is enough to show $\cU \otimes K$ is not finitely generated. $\cU \otimes K$ is the intersection of Laurent polynomial rings
\[ K[x_1^{\pm}, x_2^{\pm}, x_3^{\pm}] \cap K[(x'_1)^{\pm}, x_2^{\pm}, x_3^{\pm}] \cap K[x_1^{\pm}, (x'_2)^{\pm}, x_3^{\pm}] \cap K[x_1^{\pm}, x_2^{\pm}, (x'_3)^{\pm}]. \]
So we can immediately reduce to the case $A =K$, that is to say, that $A$ is a field.

The notation above was chosen to make the connection to cluster algebras as clear as possible. We now rename our terms for readability.
We write $(x,y,z)$ rather than $(x_1, x_2, x_3)$, and $x'$, $y'$, $z'$ in place of $x'_1$ etcetera. 
Define the polynomials $p = a_1^{+} y^3 + a_1^{-1} z^3$, $q = a_2^+ z^3 + a_2^- x^3$ and $r = a_3^+ x^3 + a_3^- y^3$.
Note that $\cU$ is a graded algebra, where $\deg(x)=\deg(y) = \deg(z) = 1$ and $K$ is in degree $0$. 
We write $\cU_e$ for the degree $e$ part of $\cU$.


\begin{Lemma} \label{vanishing}
Let $f(x,y,z) = \sum_{a+b+c=d} f_{abc} x^a y^b z^c$ be a homogenous polynomial in $K[x,y,z]$. Then $\frac{f(x,y,z)}{x^i y^j z^k}$ is in $\cU$ if and only if 
\begin{itemize}
\item $p^{i-a}$ divides $\sum_{b+c=d-a} f_{abc} y^b z^c$ for all $a < i$
\item $q^{j-k}$ divides $\sum_{a+c=d-b} f_{abc} x^b z^c$ for all $b < j$
\item $r^{k-c}$ divides $\sum_{a+b=c-d} f_{abc} x^a y^b$ for all $c < k$. 
\end{itemize}
Here the sums are over the variables on the left hand side of each summation condition.
\end{Lemma}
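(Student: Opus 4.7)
The plan is as follows. First, observe that $f/(x^i y^j z^k)$ is automatically a Laurent polynomial in $x, y, z$, so its membership in the first of the four Laurent rings defining $\cU$ is free. The three bulleted conditions correspond, respectively, to membership in the remaining three rings, and by the cyclic symmetry of the setup (cycling $(x,y,z)$, $(p,q,r)$, $(i,j,k)$) all three are handled by the same argument. I therefore focus on $K[(x')^{\pm}, y^{\pm}, z^{\pm}]$ and aim to derive the first bullet.

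The next step would be to substitute $x = p/x'$ and collect by powers of $x'$:
\[
\frac{f(x,y,z)}{x^i y^j z^k} \;=\; \frac{1}{y^j z^k} \sum_{a+b+c=d} f_{abc}\, p^{a-i} (x')^{i-a} y^b z^c.
\]
The terms with $a \geq i$ carry non-positive powers of $x'$ and polynomial coefficients in $y, z$, so they lie trivially in $K[(x')^{\pm}, y^{\pm}, z^{\pm}]$. For each $a < i$, the terms with that value of $a$ contribute the coefficient
\[
\frac{1}{p^{i-a}\, y^j z^k} \sum_{b+c=d-a} f_{abc}\, y^b z^c
\]
on $(x')^{i-a}$. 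Because $x' = p/x$ and $x$ is transcendental over $K(y,z)$, so is $x'$; hence distinct powers of $x'$ are $K[y^{\pm}, z^{\pm}]$-linearly independent, and the full expression lies in $K[(x')^{\pm}, y^{\pm}, z^{\pm}]$ exactly when each of these coefficients, one per $a < i$, lies in $K[y^{\pm}, z^{\pm}]$.

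The final step is to recognise that the displayed coefficient lies in $K[y^{\pm}, z^{\pm}]$ if and only if $p^{i-a}$ divides $\sum_{b+c=d-a} f_{abc}\, y^b z^c$ in $K[y,z]$. I would establish this via the observation that $p = a_1^+ y^3 + a_1^- z^3$ is coprime to both $y$ and $z$ in the UFD $K[y,z]$: evaluating at $y = 0$ gives $a_1^- z^3 \neq 0$ and at $z = 0$ gives $a_1^+ y^3 \neq 0$, so $p$ shares no factor with $y^j z^k$. Consequently, clearing the $y^j z^k$ denominator cannot absorb any power of $p$, and the divisibility of the numerator by $p^{i-a}$ in $K[y, z]$ is precisely the obstruction to membership in $K[y^{\pm}, z^{\pm}]$.

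I expect the coprimality observation to be the only conceptual ingredient; everything else is bookkeeping based on the defining relations $xx' = p$, $yy' = q$, $zz' = r$, together with the transcendence of $x'$ over $K(y,z)$ used to collect coefficients of $x'$-monomials independently. The two remaining bullets then follow by applying the identical argument to $K[x^{\pm}, (y')^{\pm}, z^{\pm}]$ and $K[x^{\pm}, y^{\pm}, (z')^{\pm}]$ after the obvious cyclic relabelling, noting that $q$ and $r$ are likewise coprime to the respective pairs of remaining variables.
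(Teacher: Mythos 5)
Your proposal is correct and follows essentially the same route as the paper: substitute $x = p/x'$, separate the expression by powers of $x'$ (the paper phrases the independence of these powers via a grading with $\deg x' = 1$, $\deg y = \deg z = 0$, which is equivalent to your transcendence argument), and reduce each coefficient's membership in $K[y^{\pm},z^{\pm}]$ to divisibility by the appropriate power of $p$. Your explicit remark that $p$ is coprime to $y$ and $z$ is a small detail the paper leaves implicit, but it is not a different approach.
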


\begin{proof}
We will show that $\frac{f(x,y,z)}{x^i y^j z^k}$ is in $K[x'^{\pm}, y^{\pm}, z^{\pm}]$ if and only if $p^{i-a}$ divides $\sum_{b+c=d-a} f_{abc} y^b z^c$. Similar arguments give similar criteria for when $\frac{f(x,y,z)}{x^i y^j z^k}$ is in the other Laurent polynomial rings defining $\cU$.

Write $f(x,y,z) = \sum_a x^a f_a(y,z)$. Then  
\[ \frac{f(x,y,z)}{x^i y^j z^k} = \sum_a \frac{p(y,z)^{a-i} f_a(y,z)}{(x')^{a-i} y^j z^k} .\]
 The summands all have different degrees in the grading where $\deg x'=1$ and $\deg y = \deg z =0$. So the only ways that this sum can be a Laurent polynomial is if each summand is a Laurent polynomial. The terms with $a \geq i$ are obviously Laurent polynomials; the condition that $ p(y,z)^{a-i} f_a(y,z)/((x')^{a-i} y^j z^k)$ is a Laurent polynomial is precisely that $p^{i-a}$ divides $f_a$.
\end{proof}

\begin{Lemma} \label{deg0}
The degree $0$ part of $\cU$ is the field $K$.
\end{Lemma}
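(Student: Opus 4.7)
The strategy is induction on $d := i+j+k$ for the stronger statement: if $f \in K[x,y,z]$ is homogeneous of degree $d$ and $f/(x^iy^jz^k) \in \cU$, then $f$ is a $K$-scalar multiple of $x^iy^jz^k$. The lemma follows because any homogeneous degree-$0$ element of $\cU$ can be written as such a quotient for suitable $i, j, k$. The base case $d = 0$ is immediate.

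For the inductive step, I would first dispose of the ``unbalanced'' cases via dimension counts. Applying Lemma~\ref{vanishing} at $a = 0$ gives $p^i \mid f_0(y,z) := f(0,y,z)$; since $\deg p^i = 3i$ and $f_0$ is homogeneous of degree $d = i+j+k$, if $3i > d$ (equivalently $2i > j+k$) then $f_0 = 0$, so $x \mid f$. Writing $f = x\tilde f$ and applying induction to $\tilde f/(x^{i-1}y^jz^k)$ (of denominator-degree $d-1$) finishes this case. The two symmetric reductions handle $2j > i+k$ and $2k > i+j$. After these, I may assume all three inequalities $2i \leq j+k$, $2j \leq i+k$, $2k \leq i+j$ hold; summing them forces equality throughout, so $i = j = k$ and $d = 3i$.

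In this balanced case, the degrees in Lemma~\ref{vanishing} at $a = b = c = 0$ match exactly, so $f_0 = \mu p^i$, $g_0 = \nu q^i$, $h_0 = \rho r^i$ for uniquely determined scalars $\mu, \nu, \rho \in K$. Comparing these restrictions on the three coordinate axes (for instance $f(0,0,z) = \mu(a_1^-)^i z^{3i} = \nu(a_2^+)^i z^{3i}$, and cyclically) yields
\[ \mu(a_1^-)^i = \nu(a_2^+)^i, \quad \nu(a_2^-)^i = \rho(a_3^+)^i, \quad \rho(a_3^-)^i = \mu(a_1^+)^i. \]
Multiplying these three relations gives $\mu\nu\rho\bigl[(a_1^-a_2^-a_3^-)^i - (a_1^+a_2^+a_3^+)^i\bigr] = 0$. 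This is precisely the step that invokes the genericity hypothesis on the $a_i^\pm$, which guarantees that the bracketed quantity is nonzero for every $i \geq 1$. Hence $\mu\nu\rho = 0$, and since each $a_i^\pm$ is nonzero, the three relations then force $\mu = \nu = \rho = 0$ individually. Consequently $f$ vanishes on each coordinate hyperplane, so $xyz \mid f$; writing $f = xyz\,\tilde f$ and applying the inductive hypothesis to $\tilde f/(x^{i-1}y^{i-1}z^{i-1})$ (of denominator-degree $d - 3$) completes the induction.

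The main obstacle, and the point at which the genericity hypothesis becomes essential, is the balanced case $i = j = k$. The unbalanced reductions are essentially free from dimension counting, but in the balanced case each of the three divisibility conditions leaves a one-parameter family of possibilities, and killing all three parameters requires exactly the three-way consistency on the coordinate axes combined with the assumption $(a_1^+a_2^+a_3^+)^i \neq (a_1^-a_2^-a_3^-)^i$.
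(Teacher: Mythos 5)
Your proof is correct and follows essentially the same route as the paper's: both reduce to the balanced case $i=j=k$, identify the restrictions of $f$ to the three coordinate lines as scalar multiples of $p^i$, $q^i$, $r^i$, and multiply the three resulting relations on the coordinate axes to invoke the genericity hypothesis $(a_1^+a_2^+a_3^+)^i \neq (a_1^-a_2^-a_3^-)^i$. The only difference is organizational: you run an induction on $i+j+k$ that peels off factors of $x$, $y$, $z$, where the paper instead assumes the fraction is in lowest terms and derives an immediate contradiction from the nonvanishing of the axis coefficients.
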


\begin{proof}
Suppose that $\frac{f(x,y,z)}{x^i y^j z^k}$  is in $\cU$ with $\deg f = i+j+k$ and $f \neq x^i y^j z^k$. 
We may assume that this fraction is in lowest terms, so that $x$, $y$ and $z$ don't divide $f$. Without loss of generality, assume that $i \geq j \geq k$, so $3i \geq i+j+k$.

Then $f(0,y,z)$ is divisible by $p^i$. Since $\deg f = i+j+k \leq 3i$ and $\deg p^i = 3i$, this shows that $f(0,y,z)$ is a scalar multiple of $p^i$ and, as $x$ does not divide $f$, it is a nonzero scalar multiple. Furthermore, we must have $i+j+k=3i$, and hence $i=j=k$. We therefore likewise have that $f(x,0,z)$ is a scalar multiple of $q^i$ and $f(x,y,0)$ is a scalar multiple of $r^i$. Let $A$, $B$ and $C$ be the coefficients of $x^i$, $y^i$ and $z^i$ in $f$. Then we have
\[ \left( \frac{a_1^{+} a_2^{+} a_3^{+}}{a_1^{-} a_2^{-} a_3^{-}} \right)^i = \frac{BCA}{CAB} = 1\]
contradicting our choice that $(a_1^{+} a_2^{+} a_3^{+})/(a_1^{-} a_2^{-} a_3^{-})$ is not torsion in $K^{\times}$.
\end{proof}


\begin{Lemma} \label{deg1}
For every positive integer $d$, the vector space of polynomials of degree $3d+1$ which vanish to order $d$ at the $p$'s, $q$'s and $r$'s has dimension at least $3d+3$.
\end{Lemma}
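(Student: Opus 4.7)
The plan is to reinterpret the vanishing conditions as membership in a single homogeneous ideal $I \subset K[x,y,z]$, and then to bound $\dim_K I_{3d+1}$ via the elementary fact that the codimension of an intersection of ideals is at most the sum of the codimensions.

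First, I would interpret ``$f$ vanishes to order $d$ at the $p$'s'' as $f \in (p,x)^d$. Since $p = \ell_1 \ell_2 \ell_3$ factors into three distinct linear forms in $y,z$, one has $(p,x)^d = \bigcap_{i=1}^3 (x,\ell_i)^d$; geometrically, this says $f$ vanishes to order $d$ at each of the three points $V(x,p) \subset \PP^2$. The analogous readings hold for $q$'s and $r$'s, and together they match the divisibility conditions of Lemma \ref{vanishing} with $i=j=k=d$. So writing $J_1 := (p,x)^d$, $J_2 := (q,y)^d$, $J_3 := (r,z)^d$, and $I := J_1 \cap J_2 \cap J_3$, the claim reduces to $\dim_K I_{3d+1} \geq 3d+3$.

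Second, I would compute the codimension of $J_1 \cap V_e$ inside $V_e := K[x,y,z]_e$. Decomposing $f = \sum_{a\geq 0} x^a f_a(y,z)$ with $f_a \in K[y,z]_{e-a}$, membership in $J_1$ forces $f_a \in p^{d-a}K[y,z]$ for $0 \leq a < d$ and imposes no constraint otherwise. The codimension is therefore
\[ \sum_{a=0}^{d-1}\bigl(\dim_K K[y,z]_{e-a} - \dim_K K[y,z]_{e-a-3(d-a)}\bigr), \]
which for $e=3d+1$ is routine to evaluate and gives $3d(d+1)/2$. By symmetry the same value holds for $J_2$ and $J_3$.

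Finally, the canonical injection
\[ V_{3d+1}/I_{3d+1} \hookrightarrow \bigoplus_{i=1}^3 V_{3d+1}/(J_i \cap V_{3d+1}) \]
yields
\[ \dim_K I_{3d+1} \geq \binom{3d+3}{2} - 3 \cdot \frac{3d(d+1)}{2} = 3d+3. \]
There is no real obstacle here: the argument is a dimension count, with the only conceptual step being the identification of ``vanishing order $d$'' with the concrete ideal $I$.
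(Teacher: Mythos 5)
Your proof is correct and is essentially the paper's own argument: both count the dimension of $V_{3d+1}$ and subtract $3\cdot 3d(d+1)/2$ for the (not necessarily independent) linear conditions imposed by the three families of divisibility constraints, the only difference being that you package the conditions as ideal membership and make the "codimension of an intersection is at most the sum of codimensions" step explicit via the injection into the direct sum. (The parenthetical factorization $p=\ell_1\ell_2\ell_3$ into distinct linear forms is not needed for the count and could fail over a general $K$, e.g.\ in characteristic $3$, but nothing in your argument depends on it.)
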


I believe the correct number is always exactly $3d+3$, but I don't need to know for what follows. 

\begin{proof}
Let $V$ be the vector space of polynomials of degree $3d+1$ in $x$, $y$ and $z$, so $\dim V = (3d+3)(3d+2)/2$. 
The condition that a polynomial be divisible by $p^{i-a}$ imposes $3(i-a)$ linear conditions. So, writing an element $f$ in $V$ as $\sum x^a f_a(y,z)$, the condition that $f_a$ be divisible by $p^{d-a}$ imposes $3 \left( 1+2+\cdots + d \right) = 3d (d+1)/2$ linear conditions on $V$.
So the space of polynomials in $V$ which obey the divisibility conditions with respect to $p$, $q$ and $r$ has dimension at least
\[ \dim V - 9 d(d+1)/2 = 3d+3 . \qedhere \] 
\end{proof}
%

We now prove that $\cU$ is not finitely generated. Suppose otherwise, for the sake of contradiction.
$\cU$ is a graded ring, and $\cU_0$ is simply the field $K$ by Lemma~\ref{deg0}. So, if $\cU$ is finitely generated, then $\cU_1$ is a finite dimensional $K$ vector space.
But, for every integer $d$, Lemma~\ref{deg1} shows that $\cU_1$ has dimension $\geq 3d+3$, a contradiction.

\textbf{Remark:} From a geometric perspective, let $K$ be algebraically closed of characteristic $\neq 3$. Let $\PP^2$ be the projective plane over $K$. The polynomial $p$ vanishes at $3$ points on the line $(0 : \ast : \ast)$; call them $p_1$, $p_2$, $p_3$. Similarly, let $q_1$, $q_2$, $q_3$ denote the $3$ zeroes of $q$ on $(\ast : 0 : \ast)$ and let $r_1$, $r_2$, $r_3$ denote the $3$ zeroes of $r$ on $(0 : \ast : \ast)$. Let $X$ be the surface formed by blowing up these $9$ points of $\PP^2$, and the deleting the proper transforms of the coordinate lines. Lemma~\ref{vanishing} says that $\cU$ is the algebra of global functions on $X$. Nagata's famous example of a non-finitely generated of rings of invariants also involved blowing up $9$ points in $\PP^2$ and deleting the proper transform of a cubic; see \cite[Chapter 4]{Dolg} for a presentation of Nagata's result similar to our argument here.
Gross, Hacking and Keel (unpublished) have begun studying the moduli of surfaces with an anti-canonical from the perspective of cluster algebras, and this example should be a special case of their theory.

\thebibliography{9}

\bibitem{CA3} A. Berenstein, S. Fomin, A, Zelevinsky, 
Cluster algebras. III. Upper bounds and double Bruhat cells,
\emph{Duke Math. J.} \textbf{126} (2005), no. 1, 1--52.

\bibitem{Dolg} I. Dolgachev, \emph{Lectures on invariant theory},
London Mathematical Society Lecture Note Series, \textbf{296} (2003) Cambridge University Press, Cambridge.

\end{document}